\newtheorem{thm}{Theorem}
\newtheorem{lem}[thm]{Lemma}
\title{The Demand Adjustment Problem via Inexact Restoration Method}
\author[2,4]{Jorgelina Walpen}
  \author[2]{Elina M. Mancinelli}
\author[1]{Pablo A. Lotito}
\author[2,3]{Lisandro Parente}
\affil[1]{CONICET - Pladema, Universidad Nacional del Centro de la Provincia de Buenos Aires, Campus Universitario Paraje Arroyo Seco, (B7000) Tandil, Argentina. plotito@exa.unicen.edu.ar}
 \affil[2]{FCEIA, Universidad Nacional de Rosario, Pellegrini
250, (2000) Rosario, Argentina, elina@fceia.unr.edu.ar,
walpen@fceia.unr.edu.ar.}
 \affil[3]{CONICET - CIFASIS, Ocampo y Esmeralda, (S2000EZP) Rosario, Argentina. parente@cifasis-conicet.gov.ar}
\affil[4]{Corresponding author: Jorgelina Walpen - walpen@fceia.unr.edu.ar}
\date{}                     
\providecommand{\keywords}[1]{\textbf{\textit{Keywords:}} #1}
\begin{document}


\maketitle

\begin{abstract}
In this work, the Demand Adjustment Problem (DAP) associated to urban
traffic planning is studied. The framework for the formulation of the DAP is
mathematical programming with equilibrium constraints. In
particular, if the optimization program associated to
the equilibrium constraint is considered, the DAP results in a bilevel optimization
problem. In this approach the DAP via the Inexact Restoration method is treated.

\end{abstract}

\keywords{traffic, origin-destination matrix adjustment, Inexact Restoration method, bilevel problem.}


\section{Introduction}
The Demand Adjustment Problem (DAP) consists in the estimation of the origin-destination matrix
(OD matrix) of a congested transport network. This problem is of
remarkable importance in the transportation planning process. This
matrix stores the number of trips originating and terminating in
each origin-destination pair.

The problem of adjusting the OD matrix can be modeled as an
optimization problem with equilibrium constraints and reformulated
as a bilevel problem. Among its drawbacks it has bad mathematical
properties which make it difficult to solve it. Some of them are:
non convexity, non differentiability, huge dimensions of real size
problems and the fact that the point-set-mapping which gives the
equilibrium flows is not explicitly known.

This version of the problem has been treated by many authors, some
whose works are: Nguyen S. (1977,\cite{Nguyen}), Spiess
(1990,\cite{Spiess90}), Chen and Florian (1996,\cite{C-F}), Yang et
al. (1992,\cite{Yangetal}), Codina and Barcel\'o (2004,\cite{C-B}),
Codina and Montero (2006,\cite{C-M}), Lundgren and Peterson
(2008,\cite{L-P}), Lotito and Parente (2015,\cite{LP:14}) and Walpen, Mancinelli and Lotito (2015,
\cite{WML}).

It is of remarkable importance that, with the only exception of \cite{LP:14}, all the methods
proposed in the mentioned papers are heuristics. In general, no
convergence proofs are given due to the fact that no appropriate
characterization of optimality points is available.
These methods have another characteristic in common: they all
generate sequences of feasible points through their iterations and
test the descent of the objective function of the problem.

In \cite{LP:14}, instead, the DAP is formulated as a general mathematical program with complementarity constraints (MPCC). Applying a lifting method (see \cite{Ste:12},\cite{IPS:12}), a necessary optimality condition is obtained in terms of a large non-linear semismooth system which is solved with a Newton-type method. However, the price to pay is the increase of the numerical problem size and the fact that the lower level structure is missed.

This work puts towards an approach to treat the DAP via Inexact Restoration. This method, originally proposed by
Martinez in \cite{Martinez98} and \cite{Martinez01} to solve
optimization problems with no linear constraints, has been adapted
to solve bilevel problems by Andreani et. al. in \cite{Andreani}.

The Inexact Restoration Method deals separately with feasibility and
optimality at each iteration. In the feasibility  stage, called
restoration phase, it seeks a feasible point (perhaps inexactly), considering
the original objective function and constraints.

In the optimality phase, it looks for a trial point that sufficiently reduces the value
of a Lagrangian defined by the original data in a tangent set that
approximates the feasible region, within a trust region centered at
the point obtained in the feasibility phase. Sufficient decrease of
a merit function which balances feasibility and optimality determines
the acceptance of the trial point obtained in the optimization
phase. If the trial point is not accepted, the size of the trust
region is reduced.

The purpose of this work is to offer an innovative alternative to solve DAP and a tangible application of the inexact restoration method.

This paper is organized as follows. In section 2 a model of the problem is presented as well as the assumptions made over the network. In section 3 the Inexact Restorarion method and its adaptation for bilevel problems are presented. In section 4 there is a complete description of the application of the Inexact Restoration to the DAP. A detailed presentation of every step of the algorithm is given and each subproblem is specifically treated in each subsection. Finally, in section 5 some numerical tests are presented. Conclusions are drawn in section 6.

\section{Model}

In this work, the DAP is considered as a mathematical program with equilibrium constraints (MPEC) where, for each demand, the flows are constrained to satisfy a deterministic  Wardrop's  user equilibrium (DUE) in the lower level, and an OD matrix is adjusted in the upper level taking into account a target matrix and some observed flows.

The transport network is represented as a directed graph $\mathcal{G}=(\mathcal{N},\mathcal{A})$ where $\mathcal{N}$ is the set of nodes and $\mathcal{A}$ is the set of directed links. $\mathcal{C}$ is chosen to represent the set of origin-destination pairs $(p,q)$.

Considering link flows, the DAP is formulated as:

\begin{center}(DAP)\hspace{2mm} $\min \hspace{1mm}F(v,d)=\eta_1
F_1(v)+\eta_2 F_2(d)$\\
\hspace{14mm} $s.a. \hspace{2mm} t(v)^t(v'-v)\geq 0, \; \forall
(v',d) \in \Omega,$
\end{center}
\vspace{2mm}

\noindent where $\Omega$ is the closed convex cone of pairs $(v,d)$ with $d\geq0$ and $v$ a feasible link flow for $d$, i.e. a
non-negative flow which satisfies the demand $d$. The equilibrium condition is expressed in terms of a variational inequality for the
associated link cost vector $t(v)$.
The function $F_1$ measures the deviation between the
assigned flow for the demand $d$ and the observed flow $\tilde v$,
in some links of the network $(\bar{\mathcal A}\subset \mathcal A)$.
The function $F_2$ measures the distance between $d$ and a target
matrix (usually an outdated OD-matrix $\tilde d$). The usually used
metrics are those of minimum squares, maximum entropy and maximum
likelihood (see \cite{C-F}). The parameters $\eta_1$ and $\eta_2$
reflect the confidence of the data $\tilde{g}$ and $\tilde{v}$
respectively.

For a general version of DAP, Chen and Florian proved in \cite{C-F},
under minor hypotheses of continuity of the functions $F_1,F_2$ and
$t$, that the problem admits at least one solution. In this work, the mapping $d\mapsto v^*(d)$, which assigns the equilibrium flows to a given demand $d$, is considered to be single
valued (i.e. the DUE admits an only one solution) and it is possible to write
$F(d)=F(v^*(d))$. For the problem to fit in this context it is necessary to make
some assumptions over the traffic network:

\begin{itemize}
\item the network is strongly connected, i.e. there exists at least
one route for each o-d pair;
\item the route cost functions are additive, i.e. they are the sum
of the link costs which constitute the route;
\item the link costs are separable, i.e. the flow in each link is
independent of the flow of all other links in the network;
\item the demand $d_{pq}$ is positive for each $(p,q)\in \mathcal{C}$
\item the link cost function $c_{a}:\mathbb{R}\mapsto\mathbb{R}$ is
positive, continuous and non decreasing for each $a \in
\mathcal{A}$.
\end{itemize}

These hypotheses guarantee the existence of equilibrium (both in the
link and route flow variables) and uniqueness of OD equilibrium
times. If each link cost function $c_{a}$ is
assumed to be strictly increasing, there is uniqueness of the equilibrium link flow
solution.

\section{Inexact Restoration and its adaptation to solve bilevel problems}

The Inexact Restoration Method (IRM) is motivated by the bad behavior of feasible methods in the presence of non linear constraints. To face these difficulties, the algorithms presented by Martinez et al. in \cite{Martinez98}, \cite{Martinez01} and \cite{MartinezPilotta00}, keep feasibility under control and are tolerant when the iterations are far from the solution. In \cite{MartinezPilotta05} there is an interesting overview of these algorithms and its main characteristics.

Originally, the IRM was designed to solve the problem
\begin{equation}\label{pbRI}\begin{array}{ll}{\min}&f(x)\\
s.t.&C(x)=0,\\
&x\in \Omega,
\end{array}
\end{equation}
\noindent where $f:\mathbb R^n\rightarrow \mathbb R$ and $C:\mathbb R^n\rightarrow\mathbb R^p$ are continuously differentiable functions and $S \subset \mathbb R^n$ is a closed convex set.

The algorithm consists of two well distinguished stages: feasibility
(or restoration stage) and optimality. It is an iterative method
which generates a sequence $x^k$ of feasible iterates with respect to
$\Omega$ but which not necessarily verifies $C(x)=0$. Precisely, the
restoration phase has the objective of moving the sequence in a
direction which generates a reduction of $||C(x)||$ and an auxiliary
sequence $y^k$, is built. In the second phase,  the optimality of
$y^k$ is improved by a minimization of a Lagrangian over a space
tangent to $\{C(x)=0\}$ in $y^k$.

The innovative use of the Lagrangian in the optimality phase has to
do with the fact that it behaves similarly both in the tangent space
and the feasible region. This may not be the case of the non linear
objective function.

The acceptance of the candidate $y^k$ depends on the value of a
merit function which combines feasibility and optimality.

\bigskip

Andreani et al. in \cite{Andreani} studied the possibility of adapting
the Inexact Restoration Method to solve bilevel problems. The
attractiveness of IRM had to do with the fact that this method may
allow solving these problems without reformulating them as single
level ones as most approaches for bilevel problems do. What is more,
the restoration phase gives the possibility of freely choosing a
method which improves feasibility. Consequently, if any globally
convergent algorithm is available to efficiently solve the lower
level problem, its structure could be exploited.

However, the adaptation to IRM for bilevel problems required further analysis.

\subsection{IRM adaptation for bilevel problems (IRMbi)}

Given a bilevel problem of the type
\begin{equation}\label{biRI}
\begin{array}{lll}
\min & F(x,y)\\
s.t. & x\in X\\
& y=\underset{y}{argmin}\; f(x,y)\\
&\begin{array}{ll}
s.t.&h(x,y)=0\\
 &y\geq 0
\end{array}
\end{array},
\end{equation}

\noindent to adapt the method which originally solves (\ref{pbRI}), the Karush Kuhn Tucker optimality conditions of the lower level problem are considered. In fact, they play the role of the constraint $C(x)=0$,
$$C(x,y,\mu,\gamma)=0, \;\; y\geq 0,\;\; \gamma \geq 0,$$
\noindent with
\begin{center}
$\begin{array}{lll}
C(x,y,\mu,\gamma)=\left(
\begin{array}{c}
\nabla_y f(x,y)+\nabla_y h(x,y)\mu-\gamma\\
h(x,y)\\
\gamma_1 y_1\\
\vdots\\
\gamma_m y_m
\end{array}
\right).
\end{array}$
\end{center}

The Lagrangian for the optimality phase
\begin{equation}\label{lag}
L(x,y,\mu,\gamma,\alpha)=F(x,y)+C(x,y,\mu,\gamma)^T \alpha.
\end{equation}

The restoration phase searches for a point
$z^k=(x^k,\bar y,\bar\mu,\bar\gamma)$ ``more feasible'' than the one built in the
previous iteration $s^k=(x^k,y^k,\mu^k,\gamma^k)$. To reach that
goal the lower level problem, parameterized in the variable $x^k$ is
solved. That is to say, a minimizer $\bar y$ and associated
multipliers $(\bar \mu,\bar\gamma)$ for the problem
\begin{equation}
\begin{array}{ll}
\underset{y}{\min}& f(x^k,y)\\
s.a.&h(x^k,y)=0\\
 &y\geq 0
\end{array}
\end{equation}
\noindent must be found. $z^k$ is defined as an intermediate point.
Then, a linear approximation, around $z^k$, of the feasible region
of the simplified problem (\ref{pbRIsimplif}) is built.
\begin{equation}\label{pbRIsimplif}
\begin{array}{lll}
\min & F(x,y)\\
& C(x,y,\mu,\gamma)=0\\
& s=(x,y,\mu,\gamma)\in \Omega\times\Delta
\end{array}
\end{equation}

\noindent where $\Omega\times\Delta$ represents the constraints
$x\in X, y\geq 0, \gamma\geq 0.$

The linear approximation in $z^k$ is the tangent space
$$\pi(z^k)=\{s\in \Omega\times\Delta : C'(z^k)(s-z^k)=0\}$$
and the Cauchy tangent direction $r_{tan}^k=r_{tan}(z^k)$ is
$$r_{tan}^k=P_k[z^k-\eta\nabla_s L(z^k,\alpha^k)]-z^k,$$
\noindent where $P_k [\cdot]$ is the orthogonal projection over the
space $\pi_k=\pi(z^k)$ and $L$ the Lagrangian presented above
(\ref{lag}). $r_{tan}^k$ is a feasible descent directon for
$L$ over $\pi_k$.

For the optimization phase a trust region centered in $z^k$ is
defined
$$\mathbb B_{k,i}=\{s\in \mathbb R^n : ||s-z^k||\leq \delta_{k,i}\},$$
\noindent and a candidate $v^{k,i}\in \mathbb B_{k,i}\cap \pi_k$ that reduces $L(\cdot,\alpha^k)$ is sought. The acceptance of $v^{k,i}$ depends on the value of a merit function. If it is rejected the trust radius is reduced and the scheme moves to an iteration $k,i+1$ until it finds the minimizer $z^{k,i^*}$.

The merit function used is:
$$\Psi(s,\alpha,\theta)=\theta L(s,\alpha)+(1-\theta)||C(s)||$$
\noindent where $\theta\in (0,1]$ is a penalty parameter that gives
different weights to the Lagrangian function and the feasibility.

With all these considerations the Inexact Restoration Method for
bilevel problems (IRMbi) was introduced. What is more, it was proved that there is global convergence to points which satisfy the Approximate Gradient Projection optimality conditions (AGP points).
For a detailed insight into these concepts see \cite{Andreani}.

\section{IRM for DAP}
\subsection{DAP as a bilevel problem}
The DAP was presented as a mathematical problem with equilibrium
constraints. However, Wardrop's user equilibrium can be obtained as
a solution to an optimization problem, the Traffic Assignment
Problem (TAP). The hypotheses under which this is true can be read in
\cite{TesisJor}.

In this case, DAP results in

\begin{equation}\label{DAP-TAP}
\begin{array}{l}
\min \;\;\;\; F(d,v)=\eta_1F_1(v)+\eta_2F_2(d)\\
\begin{array}{lll}
s.t. & \min  & T(v)=\underset{a\in \mathcal A}{\sum}\displaystyle\int_0^{v_a} c_a(s) ds\\
&s.t& \underset{r\in \mathcal{R}_{pq}}{\sum}
h_{pqr}= d_{pq}, \forall\; (p,q)\in \mathcal C, \\
& &h_{pqr}\geq 0, \forall \; r\in \mathcal{R}_{pq}, \forall\; (p,q)\in \mathcal C,\\
& &\underset{(p,q)\in\mathcal C}{\sum}\underset{r\in\mathcal
R_{pq}}{\sum} \delta_{pqra}h_{pqr}=v_a, \forall; a\in\mathcal A.\\
& d\geq 0.
\end{array}
\end{array}
\end{equation}

With this reformulation, DAP has bilevel structure. Consequently,
there exists the possibility of applying IRMbi to solve DAP. What is
more, for the lower level problem TAP, there exist globally
convergent methods to obtain the solution and in contrast to most of
the available methods for DAP, the complex structure of the traffic
assignment problem could be exploited.

\subsection{Change of variables for Karush Kuhn Tucker (KKT)  optimality conditions calculation}\label{cambvar}

It would be desirable to have KKT optimality conditions associated
to the lower level problem which are easy to handle. However, the
original version of TAP has a complex structure of the feasible set
due to the presence of two flow variables $v$ and $h$. To overcome this
difficulty, the TAP is reformulated in the node-arc version
presented in \cite{P}, as it is done in the non-heuristical approach
in \cite{LP:14}.

The new flow variable $X=(x_a^i)_{a\in\mathcal A,i\in \mathcal C}$
represents the arc flow disaggregated by demand. $X\in\mathbb
R^{|\mathcal A||\mathcal C|}$ is a column vector.

In this context, Wardrop's user equilibrium condition is rewritten
as
$$T(X^*)^T(X-X^*)\geq 0, \forall\; X\in \tilde\Omega(d)$$
\noindent where $\tilde\Omega(d)=\{X\geq 0 : \Gamma d-M X=0\}$.

The function $T$ and the matrices $\Gamma$ and $M$ verify:
$$T(X)=R^T t(RX)$$
with $R\in\mathbb R^{|\mathcal A|\times |\mathcal A||\mathcal C|}$
defined as
$$R=(\underbrace{I_{|\mathcal A|},...,I_{|\mathcal A|}}_\text{$|\mathcal C|$ times}),$$
\noindent and $I_{|\mathcal A|}$ the identity matrix in $\mathbb
R^{|\mathcal A|\times|\mathcal A|}.$

$$\Gamma=\begin{pmatrix}
\gamma^1 & 0 & \cdots & 0\\
0 & \gamma^2 & \ddots & \vdots\\
\vdots & \ddots & \ddots & 0\\
0 & \cdots & 0 &\gamma^{|\mathcal C|}
 \end{pmatrix}\in \mathbb R^{|\mathcal C||\mathcal N|\times |\mathcal C|},$$
\noindent with $\gamma^i=(\gamma_k^i)_{k\in\mathcal N}$ such that
$\gamma_k^i=\left\{\begin{array}{ll}
-1 & \text{if $k$ is the origin node for the demand $i$},\\
1 &  \text{if $k$ is the destination node for the demand $i$},\\
0 & \text{otherwise.}
\end{array}
\right.$
$$M=\begin{pmatrix}
A & 0 & \cdots & 0\\
0 & \ddots & \ddots & \vdots\\
\vdots & \ddots & \ddots & 0\\
0 & \cdots & 0 & A
 \end{pmatrix}\in \mathbb R^{|\mathcal C||\mathcal N|\times |\mathcal C||\mathcal A|},$$
\noindent with $A\in \mathbb R^{|\mathcal N|\times|\mathcal A|}$ being
the node-arc incidence matrix.

Finally, the KKT system for this reformulation of the lower level
problem results in:

$$\left\{ \begin{array}{rll}
T(X)+M^T\alpha-\beta&=&0,\\
\Gamma d-MX&=&0,\\
\beta^T X&=&0,\\
\beta\geq 0, X\geq 0,
\end{array}\right.$$

Here, $\alpha$ is the multiplier vector associated to the equality
constraints and $\beta$ the multiplier vector associated to
the inequality constraints.

\subsection{IRMbi for DAP}

Having done the change of variables presented above (Section
\ref{cambvar}), the goal of applying IRMbi to solve DAP is
re established.

Firstly, the simplified problem (\ref{pbRIsimplif}) for DAP,
together with the KKT system obtained, is written:
\begin{subequations}\label{DAPkkt}\begin{align}
\min &\;F(d,X)=\eta_1 F_1(RX)+\eta_2
F_2(d)\\
s.a. &\;\; T(X)+M^T\alpha-\beta=0\label{r1}\\
&\;\;\Gamma d-MX=0\label{r2}\\
&\;\;\beta^T X=0\label{r3}\\
&\;\;\beta\geq 0, X\geq 0, d\geq 0\label{r4}
\end{align}
\end{subequations}

Then, choosing
$$C(d,X,\alpha,\beta)=\left(\begin{array}{c}
T(X)+M^T\alpha-\beta\\
\Gamma d-MX\\
\beta^T X\\
\end{array}\right)\; and$$

$\Omega\times\Delta=\{s=(d,X,\alpha,\beta)^T\in\mathbb
R^{|\mathcal C|}\times\mathbb R^{|\mathcal C||\mathcal
A|}\times\mathbb R^{|\mathcal C||\mathcal N|}\times\mathbb
R^{|\mathcal C||\mathcal A|}:\; d\geq 0 \wedge X\geq 0 \wedge
\beta\geq 0\}$,  it results in:
\begin{equation}\label{DAPrisimplif}\begin{array}{lc} \min &\;
F(d,X)=\eta_1 F_1(RX)+\eta_2
F_2(d)\\
s.a. & C(s)=0,\\
&s\in \Omega\times\Delta.\\
\end{array}
\end{equation}

Having checked that the restoration phase can be carried out for DAP
under some considerations, the same must be done for the
optimization phase.

A linear approximation of the feasible set defined by the
constraints of (\ref{DAPrisimplif}) must be considered. For a point
$z=(g^*,X^*,\alpha^*\beta^*)$ it results:

$$\pi_z=\{s\in \Omega\times \Delta: C'(z)(s-z)=0\}$$

\noindent where

$$C'(z)=\begin{pmatrix}0&T'(X^*)& M^T&-I_{|\mathcal A||\mathcal C|}\\
\Gamma &-M & 0 & 0\\
0&I_{\beta^*}&0&I_{X^*}
\end{pmatrix},$$

\noindent $I_{|\mathcal A||\mathcal C|}$ is the identity matrix of
dimensions $|\mathcal A||\mathcal C|$,

\noindent $I_{\beta^*}$ is a matrix of zeros which in its
diagonal has the entries of vector $\beta^*$ and

\noindent $I_{X^*}$ is a matrix of zeros which in its diagonal has
the entries of vector $X^*$.

The matrix $C'(z)$, $C'_z$ for simplicity, always exists and can be
easily obtained. Consequently, it is possible to obtain the tangent
space $\pi_z$. $C'_z$ is a fixed matrix throughout the iterations,
and the linearization $\pi_z$ is the set of $s\in \Omega\times
\Delta$ that are solutions to the linear system: $C'_z(s-z)=0.$

\subsection{Algorithm}

In this section the complete scheme adapted for DAP is presented. The details of implementation are  given in section 4.5.

\bigskip

The following constants are fixed. $\eta>0$, $M>0$,
$\theta_{-1}\in(0,1),$ $ \delta_{min}>0,\tau_1>0, \tau_2>0$. $k=0$.

Let $s^0=(d^0,X^0,\alpha^0,\beta^0)\in \mathbb R^{|\mathcal
C|}\times \mathbb R^{|\mathcal C||\mathcal A|}\times \mathbb
R^{|\mathcal C||\mathcal N|}\times \mathbb R^{|\mathcal C||\mathcal
A|}$ be an initial approximation, $\mu^0$ an initial approximation
of the multiplier and $\omega^i$ a sequence of positive numbers such
that: $\sum\limits_{i=0}^{\infty}\omega^i<\infty$.

\bigskip

\textcolor{blue}{Step 1. Penalty parameter initialization.}

$\theta_k^{min}=min\{1,\theta_{k-1},...,\theta_{-1}\}$,

$\theta_k^{large}=min\{1,\theta_k^{min}+\omega^k\}$,

$\theta_{k,-1}=\theta_k^{large}$.

\bigskip

\textcolor{blue}{Step 2. Restoration Phase.}

Solve the traffic assignment problem for $d=d^k$ and get the
Lagrangian multipliers associated to the obtained equilibrium.

Let $X^*$ be the equilibrium solution and $\alpha^*, \beta^*$
the associated multipliers.

Define $z^k=(d^k,X^*,\alpha^*,\beta^*)$.

\bigskip

\textcolor{blue}{Step 3. Cauchy tangent direction.}

Calculate $r_{tan}^k=P_k[z^k-\eta\nabla_s L(z^k,\mu^k)]-z^k$.

$\pi_k=\{s\in\Omega\times \Delta : C'(z^k)(s-z^k)=0\}$

* If $z^k=s^k$ and $r_{tan}^k=0$, finish. $(d^k,X^k)$ is the solution to DAP.

* Otherwise, $i=0$, $\delta_{k,0}\geq\delta_{min}$ and move to Step 4.

\bigskip

\textcolor{blue}{Step 4. Optimization Phase in $\pi_k$.}

* If $r_{tan}^k=0$, set $v^{k,0}=z^k$.

* Otherwise, calculate $t_{break}^{k,i}=min\{1,\delta_{k,i}/
||r_{tan}^k||\}$ and get $v^{k,i}$ such that:

\hspace{5mm}- $v_{k,i}\in\pi_k$,

\hspace{5mm}- $||v^{k,i}-z^k||_{\infty}<\delta_{k,i}$,

\hspace{5mm}- for some $t\in (0, t_{break}^{k,i}]$,

$L(v^{k,i}, \mu^k)\leq max\{L(z^k+tr_{tan}^k,\mu^k), L(z^k, \mu^k)-
\tau_1\delta_{k,i}, L(z^k,\mu^k)-\tau_2\}$.

\bigskip

\textcolor{blue}{Step 5. Trial multipliers.}

* If $r_{tan}^k=0$ set $\mu_{trial}^{k,i}=\mu^k$.

* Otherwise, calculate $\mu_{trial}^{k,i}\in\mathbb
R ^{2|\mathcal C||\mathcal A|+|\mathcal C||\mathcal N|}$ such that
$|\mu_{trial}^{k,i}|\leq M$.

\bigskip

\textcolor{blue}{Step 6. Predicted reduction.}

Define $\forall\; \theta\in [0,1]$,

$Pred_{k,i}(\theta)=\theta[L(s^k,\mu^k)-L(v^{k,i},\mu^k)-C(z^k)^T(\mu_{trial}^{k,i}-\mu^k)]+(1-\theta)[|C(s^k)|-|C(z^k)|].$

\bigskip

Compute $\theta_{k,i}$ as the maximum $\theta\in[0, \theta_{k,i-1}]$
which verifies
$$Pred_{k,i}(\theta)\geq \dfrac{1}{2}[|C(s^k)|-|C(z^k)|].$$

Define $Pred_{k,i}=Pred_{k,i}(\theta_{k,i})$.

\bigskip

\textcolor{blue}{Step 7. Compare actual and predicted reduction.}

Calculate
$Ared_{k,i}=\theta_{k,i}[L(s^k,\mu^k)-L(v^{k,i},\mu_{trial}^{k,i})]+(1-\theta_{k,i})[|C(s^k)|-|C(v^{k,i}|)].$

* If $Ared_{k,i}\geq 0.1 Pred_{k,i}$ UPDATE:

$s^{k+1}=v^{k,i}$, $\mu^{k+1}=\mu_{trial}^k$,
$\theta_k=\theta_{k,i}$, $\delta_k=\delta_{k,i}$, $k=k+1$,

and TERMINATE iteration $k$.

* Otherwise, choose

\hspace{5mm} - $\delta_{k,i+1}\in [0.1\delta_{k,i}, 0.9
\delta_{k,i}]$,

\hspace{5mm} - i=i+1,

and move to step 4.

\subsection{Implementation issues}

So far in this work, both phases of the algorithm IRMbi have been
revised and analyzed for the DAP. In this section, details of
implementation for each step of the scheme are given.

\subsubsection{Solving the traffic assignment problem: Step 2}

For the bilevel problem DAP, there exist algorithms which efficiently
solve the lower level problem: TAP. In this work, the Disaggregated Simplicial Decomposition (DSD) algorithm is chosen.
Particularly, the version implemented in CiudadSim (Scilab Toolbox \cite{CS}, \cite{manual}) is used, as
it gives the possibility of working with the flow variable
disaggregated by demand. Precisely, this variable is an auxiliary
one and it is available without any modifications to the code of the
DSD, except for the output.

Even though the DSD algorithm solves TAP for a fixed demand $d^k$
providing a solution $X^k$, the associated multipliers $\alpha^k$
and $\beta^k$ must be obtained to build the intermediate point
$z^k=(d^k,X^k,\alpha^k,\beta^k)$, and this cannot be done
through the DSD.

However, the KKT system associated to TAP  always admits solutions
$\alpha^k$ and $\beta^k$. That is to say, for a given demand $d^k$
and the associated equilibrium vector $X^k$, there exist $\alpha^k$
and $\beta^k$ which satisfy the system:

$$\left\{ \begin{array}{rll}
T(X^k)+M^T\alpha-\beta&=&0,\\
\Gamma d^k-MX^k&=&0,\\
\beta^T X^k&=&0,\\
\beta\geq 0, X^k\geq 0,
\end{array}\right.$$

The above assertion is possible due to the linearity of the problem's constraints and the fact that there is a solution existence proof for TAP.

\noindent

To obtain a pair $(\alpha, \beta)$ compatible with
$(d^k,X^k)$, the following system is solved:
\begin{subequations}\label{MulTAP0}\begin{align}
T(X^*)+M^T\alpha-\beta&=0,\label{MulTAP1}\\
\beta^T X^*&=0,\\
\beta\geq 0,
\end{align}
\end{subequations}

The subroutine ``linsolve'' from ScicosLab 4.3 is used to solve
(\ref{MulTAP0}). This algorithm solves optimization problems with
linear constraints and consequently will provide a solution which is
a feasible point, that is to say, a pair $(\alpha, \bar \alpha)$
that satisfies (\ref{MulTAP0}) as it is needed.

\subsubsection{Building the Cauchy tangent direction: Step 3}

In this step a projection problem must be solved. Precisely, the
projection of a vector $z-v$ over the tangent space $\pi_z$ is
needed. To calculate it, the following optimization problem is
solved:
\begin{equation}\label{pbproy}
\begin{array}{ll}
\underset{s}{\min} & \dfrac{1}{2}||z-v-s||^2\\
s.a. &C'_z(s-z)=0.
\end{array}
\end{equation}

Here, the vector $v=-\eta\nabla_s L(z^k,\mu^k)$.

The optimality conditions of the problem are studied. Under
appropriate hypotheses which state non singularity of the matrix
$C'_z$ (see Lemma 4 in \cite{TesisJor}), the existence of the Cauchy
tangent direction is proved.

The subroutine ``quapro'' from ScicosLab 4.3 which solves quadratic
problems with linear constraints is chosen to solve (\ref{pbproy})
numerically.

Step 3 also includes the stopping condition. This is satisfied
by any AGP point. See \cite{Andreani} for more details.

To check the stopping condition for the candidate $z^k$ numerically,
the following test is carried out: if
\begin{center}
$||z^k-s^k||<\varepsilon_1$ y $||r_{tan}^k||<\varepsilon_2,$
\end{center}
\noindent for $\varepsilon_1$ and $\varepsilon_2$ small, the
algorithm is stopped.

\subsubsection{Finding the candidate $v^{k,i}$ which improves
 optimality: Step 4}

 The original version of IRMbi gives freedom to choose the method to
 find $v^{k,i}$ which satisfies all the conditions stated.

 The optimization phase is carried out with the objective of
 making a descent of the value of the Lagrangian. The Cauchy tangent
 direction is always a descent direction for such Lagrangian as it
 is proved in \cite{Martinez98}. However, in \cite{Martinez01} it is
 stated that $v^{k,i}=z^k+t r_{tan}^k$ may not always be the best
 candidate.

 $v^{k,i}$ must satisfy simultaneously:
 \begin{itemize}
\item $v^{k,i}\in\pi_k$,
\item $||v^{k,i}-z^k||_{\infty}<\delta_{k,i}$,
\item for some $t\in (0, t_{break}^{k,i}]$,
$$L(v^{k,i},\mu^k)\leq max\{L(z^k+tr_{tan}^k,\mu^k),
L(z^k,\mu^k)- \tau_1\delta_{k,i}, L(z^k,\mu^k)-\tau_2\}.$$
\end{itemize}

The last one is a descent condition, see Figure \ref{condMax}:

\begin{figure}[h!]
\centering
\includegraphics[width=5cm]{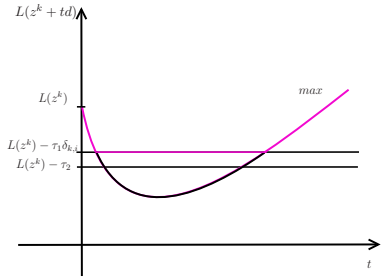}
\caption{Descent condition over the candidate $v^{k,i}$}
\label{condMax}
\end{figure}

To find such $v^{k,i}$ an algorithm proposed by Martinez in
\cite{Martinez98} is used.

The following auxiliary problem is considered:
\begin{equation}\label{pbaux}\begin{array}{ll}\min & L(v, \mu^k)\\
s.a.& C'_{z^{k}}(v-z^{k})=0,\\
& ||v-z^k||_{\infty}\leq \delta_{k,i}.
\end{array}
\end{equation}

The solution to this linearly constrained problem is
undoubtedly a candidate for $v^{k,i}$. However, it is not be
necessary to solve the problem to find an appropriate $v^{k,i}$. The
successive iterates generated by the algorithm which solves
(\ref{pbaux}) are tested and the scheme is stopped as soon as there
is one approximation which verifies all the conditions for $v^{k,i}$.

Each iteration of the algorithm is associated to a fixed $v$, and a
search direction $r_v$ is calculated. Precisely, $r_v=P_S(v-\nabla
F(v))-v$, where $S$ is the feasible region of problem (\ref{pbaux}).
Then, a backward linear search is carried out until the norm of the
direction is less than $10^{-3}$ or $100$ points have been tested.

To solve the projection problem, the associated minimum problem is
considered:
\begin{equation}\label{Proypaso4}\begin{array}{ll}\min&\dfrac{1}{2}||v-\nabla F(v) - w||^2\\
s.a. & ||w-z^k||_{\infty}\leq \delta_{k,i},\\
& w\in\pi_k.
\end{array}
\end{equation}

This problem has a solution due to the fact that it consists in the
problem of  minimizing a continuous function over a compact set.
What is more, the solution $w^*$ allows building in each iteration
of the mentioned algorithm the direction $r_v=w^*-v$, a feasible
and descent direction for $L(v,\mu^k)$.

The above assertion is proved in the following lemma:
\begin{lem}
The direction $r_{v}=P_S(v-\nabla F(v))-v,$ where $S$ is the feasible set which the
constraints in (\ref{Proypaso4}) describe, is a feasible direction.
What is more, $r_v$ is a descent direction for $L(v,\mu^k)$.
\end{lem}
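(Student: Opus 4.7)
The plan is to combine two standard facts about projections onto convex sets: first, that the projection lies in the set (giving feasibility of the displacement $r_v$ along any short step), and second, that $P_S(v-\nabla L(v,\mu^k))-v$ is the projected-gradient direction, which is a well-known descent direction via the variational characterization of the metric projection.

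First I would verify that $S$, the feasible set of (\ref{Proypaso4}), is closed and convex. Indeed $S$ is the intersection of the affine subspace $\pi_k=\{w:C'_{z^k}(w-z^k)=0\}$ with the box $\{w:\|w-z^k\|_\infty\leq\delta_{k,i}\}$; both are closed convex, so $S$ is a nonempty closed convex set (it contains $z^k$, and by construction of the inner scheme also the current iterate $v$). This already guarantees existence and uniqueness of $w^*=P_S(v-\nabla L(v,\mu^k))$. Feasibility of $r_v=w^*-v$ then follows from convexity: since both $v$ and $v+r_v=w^*$ lie in $S$, for every $t\in[0,1]$ the convex combination $v+t\,r_v=(1-t)v+t\,w^*$ belongs to $S$, so $r_v$ is a feasible direction at $v$.

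For the descent property I would invoke the standard variational inequality characterization of the projection: for $u=v-\nabla L(v,\mu^k)$ and every $w\in S$,
\begin{equation*}
\langle u-P_S(u),\, w-P_S(u)\rangle \le 0.
\end{equation*}
Choosing in particular $w=v\in S$ and writing $P_S(u)=v+r_v$ yields
\begin{equation*}
\langle -\nabla L(v,\mu^k)-r_v,\, -r_v\rangle \le 0,
\end{equation*}
which rearranges to $\nabla L(v,\mu^k)^{T}r_v \le -\|r_v\|^{2}$. Hence whenever $r_v\neq 0$ the inner product is strictly negative, so $r_v$ is a descent direction for $L(\cdot,\mu^k)$ at $v$, and moreover $r_v=0$ exactly characterises the stationarity of $v$ for (\ref{pbaux}).

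I do not expect real obstacles here; the only delicate point is a bookkeeping one, namely to make the identification between the ``$\nabla F(v)$'' appearing in the projection (\ref{Proypaso4}) and the gradient $\nabla L(v,\mu^k)$ of the objective of the auxiliary problem (\ref{pbaux}) explicit, so that the descent inequality is stated for the right function. Once that identification is fixed, the two paragraphs above give a complete proof.
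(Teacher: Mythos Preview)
Your feasibility argument coincides with the paper's: both use convexity of $S$ and the fact that $v,w^*\in S$ to conclude that $v+t\,r_v\in S$ for all $t\in[0,1]$.

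The descent argument, however, has a genuine gap that you label ``bookkeeping'' but which is in fact the substantive step. The direction $r_v$ in the lemma is built from the projection of $v-\nabla F(v)$, not of $v-\nabla L(v,\mu^k)$, and since $L(s,\mu)=F(s)+C(s)^T\mu$ these two vectors are different. There is no identification to be made: in general $P_S(v-\nabla F(v))\neq P_S(v-\nabla L(v,\mu^k))$ once the box constraint in $S$ is active, so your variational-inequality computation with $u=v-\nabla L$ is not a statement about the $r_v$ of the lemma.

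The paper proceeds differently. Keeping $\nabla F$ in the projection, it first shows $\langle r_v,\nabla F(v)\rangle<0$ by a direct norm comparison between $\|w^*-(v-\nabla F(v))\|$ and $\|v-(v-\nabla F(v))\|$ (your variational inequality, applied instead with $u=v-\nabla F(v)$, would also deliver this, even with the sharper bound $\langle r_v,\nabla F(v)\rangle\le -\|r_v\|^2$). The passage to $L$ then relies on the key observation that both $w^*$ and $v$ lie in $\pi_k=\{s:C'_{z^k}(s-z^k)=0\}$, hence $r_v=w^*-v\in\ker C'_{z^k}$, and therefore
\[
\langle r_v,\nabla L(v,\mu^k)\rangle=\langle r_v,\nabla F(v)\rangle+\langle C'_{z^k}r_v,\mu^k\rangle=\langle r_v,\nabla F(v)\rangle<0.
\]
This kernel argument is precisely what your proposal is missing; without it the descent claim for $L(\cdot,\mu^k)$ does not follow from descent for $F$.
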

\begin{proof}
To see that $r_v$ is a feasible direction it is checked that
there exists $\varepsilon > 0$ such that $v+\alpha r_v \in S
\;\forall \alpha\in [0,\varepsilon)$. In fact, $v\in S$ due to the
fact that it is an approximation built by the proposed scheme. Let
$u=v+\alpha r_v=v+\alpha (w^*-v)$ where $w^*$ is a solution to
(\ref{Proypaso4}) and consequently verifies $w^*\in S$. Re-writing,
$u=(1-\alpha)v+\alpha w^*$ with $S$ convex, it results in $u\in S$ if
$\alpha\in[0,1]$.

To see that $r_v$ is a descent direction for $L(v,\mu^k)$, it is
first proved that it is a descent direction for $F(v)$. $r_v\neq 0$ is
assumed. Then, $w^*\neq v$ and due to the fact that $w^*\in S$, it
results in: $||w^*-(v-\nabla F(v))||_2^2<||v-(v-\nabla F(v))||_2^2$,
then,

$||w^*-v||_2^2+2\langle w^*-v, \nabla F(v)\rangle+||\nabla
F(v)||_2^2<||\nabla F(v)||_2^2$, and consequently:
$$\langle r_v, \nabla F(v)\rangle<0.$$

Taking into account that $r_v$ belongs to  $Ker (C'_{z^k})$, in
fact,
$$C'_{z^k}d_v=C'_{z^k}(w^*-v)=C'_{z^k}(w^*-z^k+z^k-v)=0,$$
considering that $w^*$ and $v$ are both in $\pi_k$, it results in:
$$\langle r_v, \nabla L(v,\mu)\rangle=\langle r_v, \nabla F(v)\rangle<0$$
\noindent as it was desired to prove. \qed
\end{proof}

Numerically, the descent direction is obtained by solving problem
(\ref{Proypaso4}) with the subroutine ``quapro''  from ScicosLab 4.3.
A maximum of $10$ iterations are performed and each approximation is
tested as a possible candidate $v^{k,i}.$

\section{Numerical experiments: Validation test example}

A toy problem has been chosen as an expository device to illustrate
the applicability of the Inexact Restoration Method for bilevel
problems to solve DAP.

The transport network has 3 nodes, 4 links and 2 demands represented
by the pink arrows.

\begin{figure}[h!]
\centering
\includegraphics{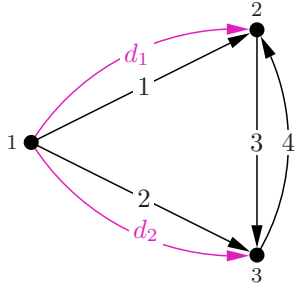}
\caption{Validation test example} \label{ejemploRI}
\end{figure}

The link flow variable, dissagregated by demand, is in this case:

$$X=(x_1^1\;x_2^1\;x_3^1\;x_4^1\;x_1^2\;x_2^2\;x_3^2\;x_4^2)^T,$$
where $x_i^j:$ represents the flow in arc $i$ associated to the
demand $j$ and consequently $x_i=x_i^1+x_i^2$. X defined in this way
verifies $X\in\mathbb R^8.$

The matrices $R\in \mathbb R^{4\times 8}$, $\Gamma \in \mathbb
R^{6\times 2}$ and $M\in \mathbb R^{6\times 8}$ are in this case:

$$R=\begin{pmatrix}1&0&0&0&1&0&0&0\\
0&1&0&0&0&1&0&0\\
0&0&1&0&0&0&1&0\\
0&0&0&1&0&0&0&1
\end{pmatrix},$$
$$\Gamma=\begin{pmatrix} - 1&0\\
1&0\\
0&0\\
0&-1\\
0&0\\
0&1
\end{pmatrix},\;
M=\begin{pmatrix} - 1&- 1&0&0&0&0&0&0\\
1&0&- 1&1&0&0&0&0\\
0&1&1&- 1&0&0&0&0\\
0&0&0&0&- 1&- 1&0&0\\
0&0&0&0&1&0&- 1&1\\
0&0&0&0&0&1&1 &- 1
\end{pmatrix}\;$$

and

$$\begin{array}{lll}T(X)&=&R^Tt(RX)\\
&=&(t_1(x_1)\; t_2(x_2)\; t_3(x_3)\; t_4(x_4)\;
t_1(x_1)\; t_2(x_2)\; t_3(x_3)\; t_4(x_4))^T\\
&=&(x_1\; x_2\; x_3\; x_4\; x_1\; x_2\; x_3\; x_4)^T.\end{array}$$

The numerical tests are carried out considering a known target
demand and observed flows in arcs 1 and 2 which correspond to an
affectation of such demand. The purpose of this approach is to
guarantee that there exists a global minimum where the objective
function of the associated DAP assumes value zero.

The constants were fixed as follows: $d_1=1.5$, $d_2=1.75$, $\tilde v_1=1.5833333$, $\tilde v_2=1.6666667$, $\eta_1=0.5$,
$\eta_2=0.5$, $F_1=||v-\tilde v||^2$, $F_2=||d-\overline d||^2$.

In the following table we present the details of the experiments
carried out and the results obtained:

\begin{table}[!h]
\centering

        \begin{tabular}{||c|c|c|c|c||}\hline\hline
            \textbf{Exp.} & Initial demand $d^{0_i}$& N$^\circ$ it & Obj Value & Dem \\ \hline
            1 & $\begin{pmatrix}1&2 \end{pmatrix}$ & $14$ & $0.003475$ & $(1.498189 \; 1.751238)$\\
            2 & $\begin{pmatrix}1&1 \end{pmatrix}$ & $10$ & $0.020833$ & $(1.624989 \;1.624989)$\\
            3 & $\begin{pmatrix}1&1.5 \end{pmatrix}$ & $12$ & $0.003475$ & $(1.498139 \;1.751272)$\\
            4 & $\begin{pmatrix}1.8&2 \end{pmatrix}$ & $9$  & $0.003472$ & $(1.499572 \; 1.750095)$\\ \hline \hline
        \end{tabular}
        \caption{Experiments details}\label{Exp}

\end{table}

\noindent The initial value for the variable $s$ for each experiment
is
$$s^{0_i}=(d^{0_i},(0\;0\;0\;0\;0\;0\;0\;0),(0\;0\;0\;0\;0\;0),(0\;0\;0\;0\;0\;0\;0\;0))^T,
i=1,2,3,4.$$

\subsection{Comments}
For three of a total of four experiments (Exp. 1, 3 and 4
precisely), convergence to a global optimum of the problem was
registered. However, this was not the case for experiment 2. The
iterations got stuck around a point which is not a global minimum
of the problem but which verifies the AGP optimality condition. What
is more, the objective function assumes over such point a value
which is close to the minimum value of the problem.

\section{Conclusions}
In this work an application of the Inexact Restoration method for bilevel problems to a real problem, the DAP, has been presented. The advantages of the method have been exploited. Few of the available methods to treat DAP maintain the structure of the lower level problem TAP as IRMbi does. Most methods deal with the single level version of the DAP. What is more, for IRMbi there are proofs of convergence to AGP points while others are just heuristics or descent methods.

In the feasibility phase the TAP was solved exactly through available software. In the optimality phase a descent method for the Lagrangian proposed by Martinez was implemented.

Some numerical tests over a small network were carried out and convergence to global optimum was obtained in 3 out of 4 cases. In the remaining case, convergence to an AGP point was achieved.

When applied to real size networks, this formulation leads to very large-scale problems. Consequently, future research will be directed towards avoiding the dissagregated flow variable in order to obtain computationally treatable problems.

\section*{Acknowledgements}
This work was partially supported by CONICET (PIP 2012-2014 N$^\circ$ 0286), FONCyT (Pict 2012-2212), SPU (3325/15c Proy.31-65-128) AND Universidad Nacional de Rosario (ING 428),  Argentina.



\begin{thebibliography}{1}

\bibitem{Andreani}
R.~Andreani, S.~L.~C. Castro, J.L. Chela, A.~Friedlander, and S.~A. Santos.
\newblock An inexact-restoration method for nonlinear bilevel programming
  problems.
\newblock {\em Computational Optimization and Applications}, 43:307--328, 2009.

\bibitem{C-F}
Y.~Chen and M.~Florian.
\newblock O-d demand adjustment problem with congestion: Part i. model analysis
  and optimality conditions.
\newblock {\em Advanced Methods in Transportation Analysis}, pages 1--22, 1996.

\bibitem{CS}
CiudadSim:.
\newblock http://www-rocq.inria.fr/metalau/ciudadsim/.

\bibitem{C-B}
E.~Codina and J.~Barcel\'o.
\newblock Adjustment of {O}-{D} trip matrices from observed volumes: An
  algorithmic approach based on conjugate directions.
\newblock {\em European Journal of Operational Research}, 155:535--557, 2004.

\bibitem{C-M}
E.~Codina and L.~Montero.
\newblock Approximation of the steepest descent direction for the o-d matrix
  adjustment problem.
\newblock {\em Annals Operational Research}, 144:329--362, 2006.

\bibitem{IPS:12}
A.~F. Izmailov, A.~L. Pogosyan, and M.~V. Solodov.
\newblock Semismooth {N}ewton method for the lifted reformulation of
  mathematical programs with complementarity constraints.
\newblock {\em Computational Optimization and Applications}, 51(1):199--221,
  2012.


\bibitem{manual}
P.~A. Lotito, E.~M. Mancinelli, J.~P. Quadrat, and L.~Wynter.
\newblock The traffic assignment toolboxes of scilab.
\newblock {\em INRIA - Rocquencourt}, 2003.

\bibitem{LP:14}
P.~A. Lotito and L.~A. Parente.
\newblock A non heuristical approach for the bilevel {O-D} matrix estimation
  problem from traffic counts.
\newblock {\em Preprint}, 2015.

\bibitem{L-P}
J.~T. Lundgren and A.~Peterson.
\newblock A heuristic for the bilevel origin-destination-matrix estimation
  problem.
\newblock {\em Transport Research B}, 42:339--354, 2008.

\bibitem{Martinez98}
J.~M. Martinez.
\newblock Two-phase model algorithm with global convergence for nonlinear
  programming.
\newblock {\em Journal of Optimization Theory and Applications},
  96(2):397--436, 1998.

\bibitem{Martinez01}
J.~M. Martinez.
\newblock Inexact-restoration method with Lagrangian tangent decrease and new
  merit function for nonlinear programming.
\newblock {\em Journal of Optimization Theory and Applications}, 111(1):39--58,
  2001.

\bibitem{MartinezPilotta00}
J.~M. Martinez and E.~A. Pilotta.
\newblock Inexact-restoration algorithm for constrained optimization.
\newblock {\em Journal of Optimization Theory and Applications}, 104(1):135--163,
  2000.

\bibitem{MartinezPilotta05}
J.~M. Martinez and E.~A. Pilotta.
\newblock Inexact-restoration methods for nonlinear programming: Advances and perspectives.
\newblock In L.~Qi, K. Teo and X. Yang editors, {\em Optimization and Control with Applications}, pages 271--292, Springer,
  2005.

\bibitem{Nguyen}
S.~Nguyen.
\newblock Estimating an o-d matrix from network data: A network equilibrium
  approach.
\newblock {\em Publication 87, Centre de recherche sur les transports (CRT),
  Universit\'e de Montr\'eal, Montr\'eal, Canada}, 1977.

\bibitem{P}
M.~Patriksson.
\newblock {\em The Traffic Assignment Problem. Models and Methods}.
\newblock Topics in Transportation VSP, The Netherlands, 1994.

\bibitem{Spiess90}
H.~Spiess.
\newblock A descent based approach for the od matrix adjustment problem.
\newblock {\em Publication 693, Centre de recherche sur les transports (CRT),
  Universit\'e de Montr\'eal, Montr\'eal, Canada.}, 1990.

\bibitem{Ste:12}
O.~Stein.
\newblock Lifting mathematical programs with complementarity constraints.
\newblock {\em Mathematical Programming}, 131:71--94, 2012.

\bibitem{TesisJor}
J.~Walpen.
\newblock Sobre la resoluci\'on del problema de ajustar la matriz origen
  destino en una red de tr\'afico vehicular congestionada.
\newblock {\em Tesis de Doctorado en Matem\'atica - FCEIA - Universidad
  Nacional de Rosario,Director: Pablo A. Lotito. Codirectora: Elina M.
  Mancinelli}, 2015.

\bibitem{WML}
J.~Walpen, E.~M. Mancinelli, and P.~A. Lotito.
\newblock A heuristic for the od matrix adjustment problem in a congested
  transport network.
\newblock {\em European Journal of Operational Research}, 242:807--819, 2015.

\bibitem{Yangetal}
H.~Yang, T.~Sasaki, Y.~Iida, and Y.~Asakura.
\newblock Estimation of origin-destination matrices from link traffic counts on
  congested networks.
\newblock {\em Transportation Research}, 26B:417--434, 1992.


\end{thebibliography}
\end{document}